\documentclass[12pt]{article}
\usepackage[utf8]{inputenc}
\usepackage[T1]{fontenc}
\usepackage{array}
\usepackage{amsmath, amsthm}
\usepackage{amssymb}
\usepackage{enumitem}

\newtheorem{theorem}{Theorem}[section]
\newtheorem{proposition}[theorem]{Proposition}
\newtheorem{corollary}[theorem]{Corollary}
\newtheorem{conjecture}[theorem]{Conjecture}
\newtheorem{lemma}[theorem]{Lemma}

\newtheorem{definition}[theorem]{Definition}

\newtheorem{remark}[theorem]{Remark}

\DeclareMathOperator{\rwc}{R} 
\DeclareMathOperator{\rws}{RW} 
\DeclareMathOperator{\Alpha}{A}
\DeclareMathOperator{\Factor}{Fac}
\DeclareMathOperator{\Pal}{Pal}
\DeclareMathOperator{\dd}{g} 
\DeclareMathOperator{\PL}{PL} 
\DeclareMathOperator{\LUF}{luf} 

\begin{document}

\title{Palindromic factorization of rich words}

\author{Josef Rukavicka\thanks{Department of Mathematics,
Faculty of Nuclear Sciences and Physical Engineering, Czech Technical University in Prague
(josef.rukavicka@seznam.cz).}}

\date{\small{October 25, 2021}\\
   \small Mathematics Subject Classification: 68R15}

\maketitle

\begin{abstract}
A finite word $w$ is called \emph{rich} if it contains $\vert w\vert+1$ distinct palindromic factors including the empty word. For every finite rich word $w$ there are distinct nonempty palindromes $w_1, w_2,\dots,w_p$ such that $w=w_pw_{p-1}\cdots w_1$ and $w_i$ is the longest palindromic suffix of $w_pw_{p-1}\cdots w_i$, where $1\leq i\leq p$. This palindromic factorization is called \emph{UPS-factorization}. Let $\LUF(w)=p$ be \emph{the length of UPS-factorization} of $w$. 

In 2017, it was proved that there is a constant $c$ such that if $w$ is a finite rich word and $n=\vert w\vert$ then $\LUF(w)\leq c\frac{n}{\ln{n}}$.
We improve this result as follows: There are constants $\mu, \pi$ such that if $w$ is a finite rich word and $n=\vert w\vert$ then \[\LUF(w)\leq \mu\frac{n}{e^{\pi\sqrt{\ln{n}}}}\mbox{.}\]
The constants $c,\mu,\pi$ depend on the size of the alphabet.
\end{abstract}
\textbf{Keywords:} Rich words; Palindromic factorization; Palindromic length; 

\section{Introduction}
A \emph{palindrome} is a word that is equal to its reversal; for example ``radar'' or ``noon''. A word $w$ is called \emph{rich} if it contains $\vert w\vert+1$ distinct palindromic factors  including the empty word. It is known that a word $w$ cannot have more than $\vert w\vert+1$ distinct palindromic factors \cite{DrJuPi}, thus less formally said,  rich words are those words that contain the maximal number of distinct palindromic factors. The rich words have attracted quite a lot of attention over the last couple of decades; see, for instance, \cite{AGO2021184, BuLuGlZa2, DrJuPi, GlJuWiZa, RUKAVICKA2021richext, Vesti2014}. A well known example of rich words are Sturmian words \cite{BaPeSta2}.

In \cite[Definition $4$ and Proposition $3$]{DrJuPi} it was proved that the longest palindromic suffix of a rich word $w$ has exactly one occurrence in $w$. Using this property it was proved that:

\begin{lemma} (see \cite[Lemma $1$]{RukavickaRichWords2017}) 
\label{lemma_A}
Let $w$ be a finite nonempty rich word. There exist distinct nonempty palindromes $w_1,w_2,\dots,w_p$ such that
\[\begin{split} 
w=w_pw_{p-1}\cdots w_1
 \mbox{ and }w_i \mbox{ is the longest palindromic suffix of }\\ w_pw_{p-1}\cdots w_i \mbox{ for }i=1,2,\dots ,p\mbox{.}
\end{split}\]
\end{lemma}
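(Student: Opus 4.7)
The plan is to proceed by induction on $n=|w|$, peeling off the longest palindromic suffix at each step and relying on the Droubay--Justin--Pirillo uniqueness property mentioned just before the lemma. The base case $n=1$ is immediate: any single letter is itself a palindrome, so set $p=1$ and $w_1=w$.

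For the inductive step, let $w_1$ denote the longest palindromic suffix of $w$ (it is nonempty since every letter of $w$ is a palindromic factor), and write $w=w'w_1$. If $w'$ is empty, then $p=1$ suffices. Otherwise, I would invoke the known fact that every factor of a rich word is rich, so $w'$ is a rich word of length strictly less than $n$. Applying the induction hypothesis to $w'$ yields distinct nonempty palindromes $w_p,w_{p-1},\dots,w_2$ with $w'=w_pw_{p-1}\cdots w_2$ and such that $w_i$ is the longest palindromic suffix of $w_pw_{p-1}\cdots w_i$ for $i=2,\dots,p$. Concatenating produces the candidate factorization $w=w_pw_{p-1}\cdots w_2w_1$.

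It then remains to verify the two conditions. The suffix condition for $i\geq 2$ is immediate from the inductive hypothesis (since the relevant prefix $w_pw_{p-1}\cdots w_i$ of $w$ lies entirely inside $w'$), and for $i=1$ it holds by the choice of $w_1$. The main obstacle, which is where the uniqueness result is genuinely used, is to show that $w_1$ differs from every $w_i$ with $i\geq 2$. Suppose for contradiction that $w_1=w_i$ for some $i\geq 2$. Then $w_i$ occurs in $w$ as a suffix of $w_pw_{p-1}\cdots w_i$, which ends at position $|w_pw_{p-1}\cdots w_i|\leq |w'|<|w|$, while $w_1$ also occurs as the actual suffix of $w$ ending at position $|w|$. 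These two occurrences are distinct, contradicting the fact that the longest palindromic suffix $w_1$ of the rich word $w$ has exactly one occurrence in $w$. The distinctness of $w_2,\dots,w_p$ among themselves is already guaranteed by the induction hypothesis, which completes the argument.
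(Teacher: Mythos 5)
Your proof is correct and follows essentially the same route as the source the paper cites: iteratively peel off the longest palindromic suffix and use the Droubay--Justin--Pirillo unioccurrence property (which the paper invokes right before stating the lemma) to guarantee that the palindromes are pairwise distinct. The only auxiliary fact you need, that the prefix $w'$ of the rich word $w$ is itself rich, is standard and correctly flagged, so the induction goes through.
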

Based on the Lemma \ref{lemma_A}, the so-called \emph{UPS-factorization} was introduced. In addition, for the current article, we define also the function $\LUF(w)$ (the length of UPS-factorization).
\begin{definition}(see \cite[Definition $2$]{RukavickaRichWords2017}) 
We define UPS-factorization (Unioccurrent Palindromic Suffix factorization) to be the factorization of a finite nonempty rich word $w$ into the form $w=w_pw_{p-1}\cdots w_1$ from Lemma \ref{lemma_A}.
Let $\LUF(w)=p$ be \emph{the length of UPS-factorization} of the rich word $w$.
\end{definition}

Let $\rwc(n)$ denote the number of rich words of length $n$. In \cite{RukavickaRichWords2017}, the UPS-factorization has been used to prove that the function $\rwc(n)$ grows subexponentially; formally \begin{equation}\label{ud789eehrk9}\lim_{n\rightarrow\infty}(\rwc(n))^{\frac{1}{n}}=1\mbox{.}\end{equation} It was proved in \cite[Theorem $3$]{RukavickaRichWords2017}) that there is a positive real constant $c$ such that for every finite rich word $w$ with $n=\vert w\vert$ we have that \begin{equation}\label{dui8f9ejf}\LUF(w)\leq c\frac{n}{\ln{n}}\mbox{.}\end{equation} This can be interpreted as follows: A rich word $w$ is a concatenation of ``small'' number of palindromes, where ``small'' means that the length of UPS-factorization with regard to the length of the word $w$ tends to zero; formally we have that \begin{equation}\label{dyue789jd}\lim_{n\rightarrow\infty}\frac{c\frac{n}{\ln{n}}}{n}=\lim_{n\rightarrow\infty}\frac{c}{\ln{n}}=0\mbox{.}\end{equation} Since every palindrome is uniquely determined by its first half, the property (\ref{dyue789jd}) allowed in consequence to prove the subexponential growth of the number of rich words. 

The formula (\ref{ud789eehrk9}) presents the currently best known upper bound on the number of rich words.
In \cite{GuShSh15} it was conjectured for rich words on the binary alphabet that for some infinitely growing function $g(n)$ the following holds true  \[{\rwc(n)} = \mathcal{O} \Bigl(\frac{n}{g(n)}\Bigr)^{\sqrt{n}}\mbox{.}\]

Next to UPS-factorization, another palindromic factorizations have been studied; see, for instance, \cite{Bannai2015,FrPuZa}. A \emph{palindromic length} $\PL(v)$ of the finite word $v$ is the minimal number of palindromes whose concatenation is equal to $v$. It was conjectured in 2013 that for every aperiodic infinite word $x$, the palindromic length of its factors is not bounded \cite{FrPuZa}. Despite a considerable effort the conjecture remains open. 

For every rich word $w$ we have obviously that $\PL(w)\leq \LUF(w)$. Thus the result of the current article presents also an upper bound on the palindromic length of rich words.
Note that, in general, for non-rich words, UPS-factorization does not need to exist.

The main result of the current article is the following theorem.
\begin{theorem}
\label{ididujjeoe90d}
For a given finite alphabet $\Alpha$, there are real positive constants $\mu, \pi$ such that if $w$ is a finite nonempty rich word over the alphabet $\Alpha$ and $n=\vert w\vert$ then \[\LUF(w)\leq \mu\frac{n}{e^{\pi\sqrt{\ln{n}}}}\mbox{.}\]
\end{theorem}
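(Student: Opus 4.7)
The strategy I would take is to bootstrap the 2017 bound $\LUF(w) \leq c n/\ln n$. Given a rich word $w$ of length $n$ with UPS factorization $w = w_p w_{p-1} \cdots w_1$, fix a threshold $L$ and split the factors by length. Those with $|w_i| \geq L$ number at most $n/L$, since each contributes at least $L$ to the total length. Those with $|w_i| < L$ form the delicate part of the count; the key observation is that they are all \emph{distinct} palindromic factors of the single rich word $w$, not just arbitrary short palindromes over $\Alpha$.

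The crucial step is to replace the trivial bound $|\Alpha|^{\lceil L/2\rceil}$ on the number of distinct palindromic factors of length below $L$ by a sharper bound driven by the previous $\LUF$ estimate. One natural route is to build an auxiliary rich word that absorbs all the short palindromic factors of $w$ as factors of its own, and then apply the 2017 estimate to \emph{its} UPS length. This converts the global bound $\LUF \leq c n/\ln n$ into a structural statement about palindromic complexity at short lengths, giving a count $N(L)$ that grows much more slowly than $|\Alpha|^{L/2}$ once $L$ is moderately large.

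With such an improved count, the master inequality $\LUF(w) \leq n/L + N(L)$, optimized in $L$, produces a strictly better global bound. Iterating the bootstrap — feeding each improved estimate back into the auxiliary step — yields successively better bounds, and the iteration saturates at the stretched-exponential form $\mu n / e^{\pi\sqrt{\ln n}}$, which is essentially the fixed point of the recursion relating an improved bound $n/g(n)$ to the bound $n/g(n/g(n))$ obtained from one more iteration. The main obstacle is exactly the structural construction in the second paragraph: one needs to control how many distinct short palindromic factors can appear in an \emph{arbitrary} rich word of length $n$, in a way that is genuinely stronger than the naive alphabet bound, and to do so while cleanly tracking the dependence of the constants $\mu$ and $\pi$ on $|\Alpha|$ through the iteration (the seed bound already forces such dependence).
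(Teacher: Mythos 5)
Your overall architecture --- split the UPS-factors at a length threshold $L$, bound the long ones by $n/L$, bound the short ones by the number $N(L)$ of distinct short palindromic factors of $w$, and optimize over $L$ --- is essentially the skeleton of the paper's argument (its Proposition \ref{tued8jnnsjhs} is a length-stratified refinement of your master inequality). The gap is in your second paragraph, which is exactly where the real content has to live. The paper obtains the sharper count not by bootstrapping the 2017 bound but by importing a separate, previously proved theorem: the factor complexity of a rich word satisfies $\Factor_w(k)\leq(4q^{2}k)^{\delta\ln 2k+2}$, i.e.\ a quasi-polynomial bound of the form $c_1k^{c_2\ln k}=e^{O((\ln k)^2)}$; since the UPS-factors are distinct factors of $w$, at most $c_1k^{c_2\ln k}$ of them can have length $k$. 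Requiring $\sum_{k\leq t}k\,c_1k^{c_2\ln k}\geq n$ forces $\ln t\approx\sqrt{\ln n/c_2}$, and the $(\ln k)^2$ in the exponent of the complexity bound is precisely the engine that produces $e^{\pi\sqrt{\ln n}}$.

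Your proposed substitute for this ingredient does not work as described. First, knowing $\LUF(u)\leq c\vert u\vert/\ln\vert u\vert$ for an auxiliary rich word $u$ says nothing about how many distinct palindromic factors of a given length $u$ has: every rich word of length $m$ has exactly $m+1$ palindromic factors regardless of its UPS-length, and the UPS-length does not control their distribution by length. Second, any word $u$ containing $N(L)$ distinct factors must itself have length at least roughly $N(L)$, so bounding $N(L)$ in terms of $\vert u\vert$ is circular unless you can independently show that $u$ may be taken short --- which is the same counting problem you started with. Finally, the fixed-point heuristic is off: iterating $g(n)\mapsto\ln\bigl(n/g(n)\bigr)$ from the seed $g(n)=\ln n$ gives $\ln n-\ln\ln n$, which never leaves the $n/\ln n$ regime; the stretched exponential arises not as a fixed point of that recursion but from balancing $n/L$ against $L\,e^{c(\ln L)^2}$. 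The missing idea is therefore a genuinely new upper bound on the factor (equivalently, length-by-length palindromic) complexity of rich words; without such a bound the proposal cannot reach the stated theorem.
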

\begin{remark}
Note that the constants $\mu,\pi$ in Theorem \ref{ididujjeoe90d} and the constant $c$ in (\ref{dui8f9ejf}) depend on the size of the alphabet $\Alpha$.
\end{remark}

To prove Theorem \ref{ididujjeoe90d}, we use principally the same ideas like in \cite[Theorem $3$]{RukavickaRichWords2017}). The distinction is that in \cite[Theorem $3$]{RukavickaRichWords2017}) we considered all palindromes of given length factors of the rich word in question. In the current article we apply the upper bound for the factor complexity of rich words from \cite{RukavickaFacCmplx2021}. This allows the improvement of the upper bound (\ref{dui8f9ejf}).

Based on our research that we performed we present the following conjecture. 
\begin{conjecture}
For a given finite alphabet $\Alpha$, there is a positive real constant $\lambda$ such that if $w$ is a finite nonempty rich word over the alphabet $\Alpha$ and $n=\vert w\vert$ then $\LUF(w)\leq \lambda\sqrt{n}$.
\end{conjecture}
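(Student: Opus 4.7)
The plan is to attack the conjecture by separating the UPS-factors according to length. Write the UPS-factorization of $w$ as $w = w_p w_{p-1} \cdots w_1$; recall that the $w_i$ are pairwise distinct nonempty palindromic factors of $w$ with $\sum_{i=1}^p |w_i| = n$. Choose a threshold $T = T(n)$ to be optimized, and split the index set into $I_{\mathrm{long}} = \{i : |w_i| > T\}$ and $I_{\mathrm{short}} = \{i : |w_i| \leq T\}$. The bound $|I_{\mathrm{long}}| \leq n/T$ is immediate from the length sum. For $|I_{\mathrm{short}}|$, distinctness of the $w_i$ gives
\[|I_{\mathrm{short}}| \leq \sum_{k=1}^{T} P_k(w),\]
where $P_k(w)$ denotes the number of distinct palindromic factors of $w$ of length exactly $k$.

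To reach $\LUF(w) \leq \lambda\sqrt{n}$ from the combined estimate $n/T + \sum_{k \leq T} P_k(w)$, the natural choice is $T \asymp \sqrt{n}$, which already makes $|I_{\mathrm{long}}|$ of the right order. This reduces the conjecture to the following cumulative palindromic-complexity statement: for every rich word $w$ of length $n$ and every $T \leq n$,
\[\sum_{k=1}^{T} P_k(w) = O(T),\]
with the implied constant depending only on $|\Alpha|$.

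The main obstacle is precisely this linear cumulative bound. The trivial estimate $P_k(w) \leq |\Alpha|^{\lceil k/2 \rceil}$ is far too weak---it reproduces the $n/\ln n$ bound of \cite{RukavickaRichWords2017}---while the alphabet-free bound $\sum_k P_k(w) \leq n+1$ from \cite{DrJuPi} is sharp in total but gives no length-by-length control. The strategy I would pursue is to upgrade the factor-complexity bound of \cite{RukavickaFacCmplx2021} used in Theorem~\ref{ididujjeoe90d} to a genuinely palindromic bound, by exploiting two features of rich words: (i) every complete first return to a palindromic factor is itself a palindrome, which should yield a recursion forcing $P_{k+c} \geq P_k - O(1)$ and thus preventing $P_k$ from spiking in any short interval of lengths, and (ii) the unioccurrence of the longest palindromic suffix in a rich word, which constrains how many distinct palindromes of a given length can be realized as UPS-factors along any prefix of $w$.

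A complementary route that I would pursue in parallel is to revisit the balancing step in Theorem~\ref{ididujjeoe90d}: its threshold $T \asymp e^{\pi\sqrt{\ln n}}$ arises from optimizing against the factor-complexity bound of \cite{RukavickaFacCmplx2021}, whereas a truly linear palindromic bound $\sum_{k \leq T} P_k(w) \leq \lambda' T$ would push the optimum all the way to $T \asymp \sqrt{n}$ and directly yield $\LUF(w) \leq (\lambda'+1)\sqrt{n}$. Modulo the linear cumulative palindromic-complexity estimate, the remainder of the argument is a bookkeeping adaptation of the techniques already present in \cite{RukavickaRichWords2017} and in the current paper; establishing that estimate is where essentially all of the work---and all of the novelty---would lie.
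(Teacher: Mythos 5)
This statement is the paper's \emph{Conjecture}: the paper offers no proof of it, so there is no argument of the author's to compare yours against, and the only question is whether your proposal actually closes the problem. It does not. Your reduction step is sound and is essentially the same bookkeeping as Proposition \ref{tued8jnnsjhs}: splitting the UPS-factors at a threshold $T$ gives $\LUF(w)\leq n/T+\sum_{k=1}^{T}P_k(w)$, and with $T\asymp\sqrt{n}$ the long factors contribute $O(\sqrt{n})$. But the entire content of the conjecture is then pushed into the claim $\sum_{k=1}^{T}P_k(w)=O(T)$ with a constant depending only on $\vert\Alpha\vert$, which you state as a target rather than prove. You acknowledge this yourself, so what you have written is a proof plan, not a proof; as it stands there is a genuine gap exactly at the step you identify.

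Moreover, the missing lemma is plausibly at least as hard as the conjecture itself. For rich words the palindromic complexity and the factor complexity are tightly linked (for words with language closed under reversal one has $P_k+P_{k+1}=\Factor_w(k+1)-\Factor_w(k)+2$, with equality characterizing richness), so a uniformly linear cumulative bound $\sum_{k\leq T}P_k(w)=O(T)$ would essentially force $\Factor_w(T)=O(T)$. The best known upper bound on the factor complexity of rich words is the superpolynomial estimate $n^{O(\ln n)}$ of Theorem \ref{tjnmgh7ety}, and it is precisely the gap between that bound and a linear one that separates the paper's $n/e^{\pi\sqrt{\ln n}}$ result from the conjectured $\sqrt{n}$. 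Your heuristic (i), that complete first returns to palindromes in rich words are palindromes, is a true and relevant structural fact, but the step from it to ``$P_k$ cannot spike on a short interval of lengths'' and thence to a linear cumulative bound is not carried out and is not obviously attainable. So: correct framing, correct identification of the bottleneck, but no proof.
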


\section{Preliminaries}
Let $\mathbb{N}_1$ denote the set of all positive integers, let $\mathbb{R}$ denote the set of all real numbers, and let $\mathbb{R}^+$ denote the set of all positive real numbers. 

Let $\Alpha$ denote a finite alphabet and let $q=\vert \Alpha\vert$. Let $\Alpha^n$ denote the set of all words of length $n\in\mathbb{N}_1$ over the alphabet $\Alpha$. Let $\Alpha^+=\bigcup_{i\geq 1}\Alpha^i$, let $\epsilon$ denote the empty word, and let $\Alpha^*=\{\epsilon\}\cup\Alpha^+$. Let $\Alpha^{\infty}$ denote the set of all infinite words over the alphabet $\Alpha$; formally $\Alpha^{\infty}=\{a_1a_2\cdots \mid a_i\in\Alpha\mbox{ and }i\in\mathbb{N}_1\}$. 

Let $\Pal\subseteq\Alpha^*$ denote the set of all palindromes and let $\rws\subseteq \Alpha^*$ denote the set of all rich words. We have that $\epsilon\in\Pal\cap\rws$.

Let $\Factor_w\subseteq \Alpha^*$ be the set of all finite factors of the word $w\in\Alpha^*\cup\Alpha^{\infty}$. 
Let $\Factor_w(n)=\vert \Factor_w\cap\Alpha^n\vert$. The function $\Factor_w(n)$ is the \emph{factor complexity} of the word $w$.

An infinite word $w\in\Alpha^{\infty}$ is rich if $\Factor_w\subseteq \rws$; it means that all finite factors of $w$ are rich. Let $\rws^{\infty}\subseteq\Alpha^{\infty}$ denote the set of all infinite rich words.
\section{Factor complexity}

Let $\delta=\frac{3}{2(\ln{3}-\ln{2})}$. Recall that $q$ denote the size of the alphabet $\Alpha$. There is a subexponential upper bound on the factor complexity of rich words:
\begin{theorem} (see \cite[Theorem 1.1]{RukavickaFacCmplx2021})
\label{tjnmgh7ety}
If $w\in\rws\cup \rws^{\infty}$ and $n\in\mathbb{N}_1$ then
\[\vert \Factor_w(n)\vert \leq (4q^{2}n)^{\delta\ln{2n}+2}\mbox{.}\]
\end{theorem}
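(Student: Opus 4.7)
The plan is to reduce the problem to counting all rich words of length $n$ and then to derive a recursive inequality whose length-reduction ratio of $3/2$ gives rise to the constant $\delta = 3/(2(\ln 3 - \ln 2))$ in the exponent. First I would invoke the standard closure property that every factor of a rich word is itself rich, so that $|\Factor_w(n)| \leq R(n)$ where $R(n) = |\rws \cap \Alpha^n|$; it then suffices to prove $R(n) \leq (4q^2 n)^{\delta \ln(2n) + 2}$.

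The heart of the argument will be a recursive bound of roughly the form $R(n) \leq (4q^2 n)^{3/2}\, R(\lceil 2n/3 \rceil)$. The key combinatorial input is the UPS-factorization $w = w_p w_{p-1} \cdots w_1$, which encodes any rich word by its sequence of longest palindromic suffixes. I would split the palindromes in this factorization into a \emph{small} class (length at most $n/3$) and a \emph{large} class. The concatenation of the small palindromes forms a rich prefix of length at most $2n/3$, contributing the $R(\lceil 2n/3 \rceil)$ factor; the large palindromes are few in number (they are pairwise distinct and their total length is at most $n$), and each one is determined by its left half, of length at most $n/2$. Combined with a choice of boundary positions separating the small and large portions, I would aim to absorb the overhead into a single factor of $(4q^2 n)^{3/2}$.

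Iterating this recursion $k = \lceil \log_{3/2} n \rceil$ times reduces $n$ to $O(1)$ and accumulates a product of the form $\prod_{i=0}^{k} (4q^2 n (2/3)^i)^{3/2}$, whose exponent is $(3/2)k \leq \delta \ln(2n) + O(1)$ since $\delta = 3/(2\ln(3/2))$; this matches the stated bound up to the additive $+2$ in the exponent (which can absorb boundary effects and the bounded base case).

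The main obstacle is establishing the recursion with the right constants. A naive bound on palindromes of length $\ell$ using $q^{\lceil \ell/2 \rceil}$ is much too weak, so the argument must exploit the rigidity of rich words: the distinctness of the palindromes in a UPS-factorization, the unioccurrence of each longest palindromic suffix (by \cite{DrJuPi}), and the fact that a rich word of length $m$ has only $m+1$ distinct palindromic factors in total. I expect that a careful level-by-level counting argument, supported by lemmas bounding how many distinct palindromes of a given length can coexist in a rich word, is what keeps the per-step overhead polynomial and in particular lets one achieve the $(4q^2 n)^{3/2}$ factor per recursion step.
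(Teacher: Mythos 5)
This theorem is not proved in the paper at all: it is imported verbatim from \cite[Theorem 1.1]{RukavickaFacCmplx2021}, and the only internal commentary is the remark disposing of the case $q=1$. So there is no in-paper proof to compare against; I can only assess your argument on its own terms, and it has a fatal flaw at the very first step.

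Your reduction from $\Factor_w(n)$ to $\rwc(n)$ (the number of \emph{all} rich words of length $n$) is valid as an inequality --- factors of rich words are rich --- but it cannot possibly yield the stated bound, because the target inequality $\rwc(n)\leq (4q^{2}n)^{\delta\ln 2n+2}$ is false. The right-hand side is $e^{O(\ln^{2}n)}$, i.e.\ quasi-polynomial, whereas already over a binary alphabet the number of rich words of length $n$ is known to grow at least like $C^{\sqrt{n}}$ for some $C>1$ (this is the lower bound of Guo, Shallit and Shur, the same paper \cite{GuShSh15} whose conjecture $\rwc(n)=\mathcal{O}(n/g(n))^{\sqrt{n}}$ is quoted in the introduction here). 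Since $e^{c\sqrt{n}}$ eventually dominates $e^{O(\ln^{2}n)}$, your proposed recursion $\rwc(n)\leq (4q^{2}n)^{3/2}\rwc(\lceil 2n/3\rceil)$ is also false: iterating it would bound $\rwc(n)$ quasi-polynomially. The whole point of the factor-complexity theorem is that the $\Factor_w(n)$ distinct length-$n$ factors all live inside \emph{one} ambient rich word $w$, which is a far more rigid constraint than each of them merely being rich on its own; any correct proof must run the recursion (or whatever counting device) on $\Factor_w(\cdot)$ itself, exploiting that shared ambient structure, not on the global count $\rwc(\cdot)$.

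Your reading of the constant $\delta=\tfrac{3}{2\ln(3/2)}$ as ``exponent $3/2$ per step times $\log_{3/2}(2n)$ steps of a length-reduction by $2/3$'' is a sensible reverse-engineering of the shape of the bound, and a recursion of that general form applied directly to $\Factor_w$ is plausibly what the cited proof does. But as written, the proposal proves a false intermediate statement, and the sketch of the single recursion step (splitting the UPS-factorization into small and large palindromes, encoding large ones by half-words) is too loose to check: you yourself flag that the naive $q^{\lceil \ell/2\rceil}$ count is far too weak, and the ``careful level-by-level counting argument'' that is supposed to rescue it is exactly the missing content. So the proposal does not establish the theorem.
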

\begin{remark}
In \cite[Theorem 1.1]{RukavickaFacCmplx2021}, there is a condition that $q>1$. But the theorem obviously holds also for $q=1$, because in such a case we have that $\Factor_w(n)=1$ for every $n\in\mathbb{N}_1$. That is why we omit this condition in Theorem \ref{tjnmgh7ety}.
\end{remark}

Let $c_2=\delta(\ln{4}+2\ln{q})+\delta+\delta\ln{2}+2$. Let $\dd\in\mathbb{R}^+$ and let $\dd<1$. Let $\alpha=\sqrt{\frac{1}{c_2}}$ and let $\beta=\frac{-1-{\dd}}{2c_2}$. 
Let $c_1$ be the minimal real constant such that $c_1\geq e^{(\delta\ln{2}+2)(\ln{4}+2\ln{q})}$ and $c_1e^{\beta+{\dd}\beta+c_2\beta^2}\geq 1$. It means that \[c_1=\min\{e^{(\delta\ln{2}+2)(\ln{4}+2\ln{q})}, e^{-(\beta+{\dd}\beta+c_2\beta^2)}\}\mbox{.}\]

\begin{remark}
The constant $0<\dd<1$ can be choosen arbitrarily.

The constants $\dd$, $\alpha$, and $\beta$ will be used in next sections. For Lemma \ref{y7xxcvch83g} it is necessary that $c_1e^{\beta+{\dd}\beta+c_2\beta^2}\geq 1$. That is why we introduced these constants already in this section, where $c_1$ is defined. 
\end{remark}

The merit of the next corollary is to have the upper bound for factor complexity of rich words in a simple form.
\begin{corollary}
\label{tture7eyu8ui}
If $w\in\rws\cup \rws^{\infty}$ and $n\in\mathbb{N}_1$ we have that \[\vert \Factor_w(n)\vert \leq c_1n^{c_2\ln{n}}\mbox{.}\]
\end{corollary}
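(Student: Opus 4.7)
The plan is to reduce the claim to a manipulation of the inequality in Theorem~\ref{tjnmgh7ety} after passing to logarithms. Write $x = \ln n$, $A = \delta \ln 2 + 2$, and $B = \ln 4 + 2 \ln q$, so that $(\delta \ln(2n) + 2) \ln(4q^{2} n) = (\delta x + A)(x + B) = \delta x^{2} + (\delta B + A)\, x + AB$. Since by definition $c_{2} = \delta B + \delta + A$, the middle coefficient equals $c_{2} - \delta$. Hence Theorem~\ref{tjnmgh7ety} gives
\[
\ln \vert \Factor_{w}(n)\vert \leq \delta x^{2} + (c_{2} - \delta)\, x + AB,
\]
while the target bound $c_{1} n^{c_{2} \ln n}$ corresponds to $\ln c_{1} + c_{2} x^{2}$. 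Subtracting, one obtains
\[
c_{2} x^{2} + \ln c_{1} - \bigl(\delta x^{2} + (c_{2} - \delta)\, x + AB\bigr) = (c_{2} - \delta)\, x(x-1) + (\ln c_{1} - AB).
\]

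The second summand is non-negative by the defining inequality $c_{1} \geq e^{AB}$. The first summand is non-negative whenever $x \leq 0$ or $x \geq 1$, that is, for $n = 1$ or $n \geq 3$, so the target inequality is immediate in those cases. The remaining case $n = 2$ is handled separately: the trivial bound $\vert \Factor_{w}(2)\vert \leq q^{2}$ is dominated by $c_{1} \leq c_{1} \cdot 2^{c_{2} \ln 2}$, since $c_{1} \geq e^{AB}$ and, as $A > 2$, we have $AB > 2(\ln 4 + 2 \ln q) \geq \ln(q^{2})$.

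The main obstacle is purely bookkeeping: aligning the coefficient $c_{2} - \delta$ that appears naturally in the expansion of $(\delta \ln(2n)+2)\ln(4q^{2}n)$ with the defined value of $c_{2}$, and absorbing the constant term $AB$ into $\ln c_{1}$. No new idea beyond invoking Theorem~\ref{tjnmgh7ety} and regrouping the terms of a quadratic in $\ln n$ is required.
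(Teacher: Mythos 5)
Your proof is correct and takes essentially the same route as the paper: expand the exponent of the bound in Theorem~\ref{tjnmgh7ety} as a quadratic in $\ln n$, absorb the constant term $AB=(\delta\ln 2+2)(\ln 4+2\ln q)$ into $c_1$ via the defining inequality $c_1\geq e^{AB}$, and dominate the linear term by the quadratic one. You are in fact slightly more careful than the paper: the comparison of the linear and quadratic terms reduces to $\ln n\leq \ln^2 n$, which fails for $n=2$ --- a case the paper's displayed chain of inequalities silently passes over --- and your separate verification $\vert\Factor_w(2)\vert\leq q^2\leq e^{AB}\leq c_1\leq c_1\cdot 2^{c_2\ln 2}$ closes that gap.
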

\begin{proof}
We have that 
\[\begin{split}(4q^{2}n)^{\delta\ln{2n}+2}=e^{(\ln{4}+2\ln{q}+\ln{n})(\delta\ln{n}+\delta\ln{2}+2)}= \\ e^{\delta\ln{n}(\ln{4}+2\ln{q})+\delta\ln^2{n}+(\delta\ln{2}+2)(\ln{4}+2\ln{q})+(\delta\ln{2}+2)\ln{n}}\leq \\ e^{(\delta\ln{2}+2)(\ln{4}+2\ln{q})}e^{(\delta(\ln{4}+2\ln{q})+\delta+\delta\ln{2}+2)\ln^2{n}}=\\ e^{(\delta\ln{2}+2)(\ln{4}+2\ln{q})}n^{(\delta(\ln{4}+2\ln{q})+\delta+\delta\ln{2}+2)\ln{n}}\mbox{.}
\end{split}\]
Then the lemma follows from Theorem \ref{tjnmgh7ety}. This ends the proof.
\end{proof}

\section{UPS-length}

The next technical lemma shows a lower bound on the sum $\sum_{i=1}^{k}i\lfloor c_1i^{c_2\ln{i}}\rfloor$ using the constant $\dd$. 
The sum $\sum_{i=1}^{k}i\lfloor c_1i^{c_2\ln{i}}\rfloor$ has the following interpretation: It is the length of the word $w$ which is concatenation of $\lfloor c_1i^{c_2\ln{i}}\rfloor$ words of length $i$ for $i\in\{1,2,\dots,k\}$. 

\begin{lemma}
\label{djuyeim8445}
If $k\in\mathbb{N}_1$ then 
\[\sum_{i=1}^{k}i\lfloor c_1i^{c_2\ln{i}}\rfloor\geq (k^{\dd}-1)(k-k^{\dd})c_1(k-k^{\dd})^{c_2\ln{(k-k^{\dd})}}-\frac{k(k+1)}{2}\mbox{.}\]
\end{lemma}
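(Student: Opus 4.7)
My plan is to split the proof into three ingredients: stripping the floor, restricting the sum to a short high-index window, and using monotonicity of the integrand on that window.

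First I would use the elementary bound $\lfloor x\rfloor \geq x-1$ termwise to write
\[
\sum_{i=1}^{k} i\lfloor c_1 i^{c_2\ln i}\rfloor \;\geq\; \sum_{i=1}^{k} i\bigl(c_1 i^{c_2\ln i}-1\bigr) \;=\; \sum_{i=1}^{k} c_1\, i^{1+c_2\ln i} \;-\; \frac{k(k+1)}{2}.
\]
The subtracted term $\frac{k(k+1)}{2}$ is exactly the one appearing in the claimed inequality, so it only remains to prove that
\[
\sum_{i=1}^{k} c_1\, i^{1+c_2\ln i} \;\geq\; (k^{\dd}-1)(k-k^{\dd})\, c_1 (k-k^{\dd})^{c_2\ln(k-k^{\dd})}.
\]

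Next I would discard most of the sum and keep only the tail. Let $m=\lceil k-k^{\dd}\rceil$, so that every integer $i$ with $m\leq i\leq k$ satisfies $i\geq k-k^{\dd}$. A direct count shows that the number of such integers is
\[
k-m+1 \;\geq\; k-(k-k^{\dd}+1)+1 \;=\; k^{\dd},
\]
so in particular there are at least $k^{\dd}-1$ indices in this window. The function $x\mapsto x^{1+c_2\ln x}=e^{\ln x+c_2\ln^2 x}$ is increasing for $x\geq 1$ (its logarithmic derivative is $(1+2c_2\ln x)/x>0$), so assuming $k-k^{\dd}\geq 1$ (which holds once $k$ is large enough, and the small cases can be absorbed into a trivial verification or the vacuous $k^{\dd}-1\leq 0$ regime), every term with $i\geq m$ satisfies
\[
c_1\, i^{1+c_2\ln i} \;\geq\; c_1\,(k-k^{\dd})^{1+c_2\ln(k-k^{\dd})}.
\]

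Summing these lower bounds over the window and using the count $\geq k^{\dd}-1$ yields
\[
\sum_{i=m}^{k} c_1\, i^{1+c_2\ln i} \;\geq\; (k^{\dd}-1)\, c_1(k-k^{\dd})(k-k^{\dd})^{c_2\ln(k-k^{\dd})},
\]
and since $\sum_{i=1}^{k}\geq\sum_{i=m}^{k}$, the required bound follows after reinstating the $-\frac{k(k+1)}{2}$ term from the first step.

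The main subtlety I anticipate is the integer bookkeeping in the window length: showing that the number of integers in $[k-k^{\dd},k]$ is at least $k^{\dd}$ (hence at least $k^{\dd}-1$) regardless of whether $k^{\dd}$ is an integer, and checking that monotonicity of $x^{1+c_2\ln x}$ applies throughout the window, i.e.\ that $k-k^{\dd}\geq 1$. Everything else is a straightforward combination of floor-stripping, monotonicity, and counting.
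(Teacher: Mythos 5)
Your proof is essentially the paper's own argument: strip the floor via $\lfloor x\rfloor\geq x-1$, discard everything below $\lceil k-k^{\dd}\rceil$, count at least $k^{\dd}-1$ surviving indices, and bound each surviving term from below using monotonicity of $x\mapsto x^{1+c_2\ln x}$. The one place you go beyond the paper is in flagging that this monotonicity holds only for $x\geq e^{-1/(2c_2)}$, so the per-term bound genuinely needs $k-k^{\dd}\geq 1$; the paper applies the same bound silently without this check. Be aware, though, that your fallback for small $k$ is too optimistic: when $0<k-k^{\dd}<1$ the factor $(k-k^{\dd})^{c_2\ln(k-k^{\dd})}=e^{c_2\ln^{2}(k-k^{\dd})}$ blows up as $k-k^{\dd}\to 0^{+}$ (e.g.\ $k=2$ with $\dd$ close to $1$), so those cases are neither vacuous nor trivially verifiable --- but this is a defect inherited from the lemma's own proof, and it is harmless for the paper's purposes since the lemma is only invoked in the limit $k\to\infty$, where $k-k^{\dd}\geq 1$ certainly holds.
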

\begin{proof}
We have that \[\begin{split}\sum_{i=1}^{k}i\lfloor c_1i^{c_2\ln{i}}\rfloor\geq\sum_{i=1}^{k}i(c_1i^{c_2\ln{i}}-1)=\sum_{i=1}^{k}ic_1i^{c_2\ln{i}}-\sum_{i=1}^{k}i = \\ \sum_{i=1}^{\lceil k-k^{\dd}\rceil}ic_1i^{c_2\ln{i}} + \sum_{i=\lceil k-k^{\dd}\rceil+1 }^{k}ic_1i^{c_2\ln{i}} -\sum_{i=1}^{k}i \geq \\ \sum_{i=\lceil k-k^{\dd}\rceil+1 }^{k}ic_1i^{c_2\ln{i}}-\sum_{i=1}^{k}i \geq \lfloor k^{\dd}\rfloor(k-k^{\dd})c_1(k-k^{\dd})^{c_2\ln{(k-k^{\dd})}}-\sum_{i=1}^{k}i\geq \\  (k^{\dd}-1)(k-k^{\dd})c_1(k-k^{\dd})^{c_2\ln{(k-k^{\dd})}} -\frac{k(k+1)}{2} \mbox{.}\end{split}\]
This completes the proof.
\end{proof}

For the sum $\sum_{i=1}^{k}i\lfloor c_1i^{c_2\ln{i}}\rfloor$ we consider $k$ to be a function of $n\in\mathbb{N}_1$. Then for the lower bound from Lemma \ref{djuyeim8445} we show a limit behavior as $n$ tends to infinity.
\begin{proposition}
\label{bb5f41441d}
If $n\in\mathbb{N}_1$, $\sigma:\mathbb{N}_1\rightarrow\mathbb{N}_1$, $\lim_{n\rightarrow\infty}\sigma(n)=\infty$, and $k_n=e^{\sigma(n)}$ then 
\[\lim_{n\rightarrow\infty}\frac{(k_n^{\dd}-1)(k_n-k_n^{\dd})c_1(k_n-k_n^{\dd})^{c_2\ln{(k_n-k_n^{\dd})}}-\frac{k_n(k_n+1)}{2}}{e^{(1+{\dd})\sigma(n)}c_1e^{c_2(\sigma(n))^2}}=1\mbox{.}\]
\end{proposition}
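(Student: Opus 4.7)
\medskip
\noindent\textbf{Proof plan.} Write $\tau = \sigma(n)$, so $k_n = e^{\tau}$ and $\tau\to\infty$. The plan is to show that in the numerator the main product is asymptotically equivalent to the denominator $c_1 e^{(1+\dd)\tau+c_2\tau^2}$, while the subtracted term $\tfrac{k_n(k_n+1)}{2}$ is negligible compared to it. The proof is essentially a careful asymptotic expansion of each factor, followed by an observation that the $c_2\tau^2$ term in the exponent beats any polynomial-in-$k_n$ correction.

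\medskip
\noindent\textbf{Step 1: handle the three factors of the main term.} Using $k_n^{\dd}=e^{\dd\tau}$ and $\dd<1$, I would first note:
\[
k_n^{\dd}-1 = e^{\dd\tau}(1-e^{-\dd\tau}) \sim e^{\dd\tau},\qquad k_n-k_n^{\dd} = e^{\tau}\bigl(1-e^{-(1-\dd)\tau}\bigr) \sim e^{\tau}.
\]
The only delicate factor is $(k_n-k_n^{\dd})^{c_2\ln(k_n-k_n^{\dd})}$. Write
\[
\ln(k_n-k_n^{\dd}) = \tau + \eta_n, \qquad \eta_n := \ln\!\bigl(1-e^{-(1-\dd)\tau}\bigr).
\]
Since $\dd<1$, $\eta_n = O(e^{-(1-\dd)\tau})$, so both $2c_2\tau\eta_n$ and $c_2\eta_n^{2}$ tend to $0$ as $\tau\to\infty$. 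Therefore
\[
(k_n-k_n^{\dd})^{c_2\ln(k_n-k_n^{\dd})} = e^{c_2(\tau+\eta_n)^2} = e^{c_2\tau^2}\bigl(1+o(1)\bigr).
\]

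\medskip
\noindent\textbf{Step 2: combine the main term.} Multiplying the three asymptotics from Step~1,
\[
(k_n^{\dd}-1)(k_n-k_n^{\dd})\,c_1(k_n-k_n^{\dd})^{c_2\ln(k_n-k_n^{\dd})} \;\sim\; c_1\,e^{\dd\tau}\cdot e^{\tau}\cdot e^{c_2\tau^2} \;=\; c_1\,e^{(1+\dd)\tau+c_2\tau^2},
\]
which is precisely the denominator $c_1\,e^{(1+\dd)\sigma(n)}\,e^{c_2\sigma(n)^2}$. Dividing, the ratio of the main numerator to the denominator tends to $1$.

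\medskip
\noindent\textbf{Step 3: show the correction is negligible.} It remains to verify
\[
\frac{k_n(k_n+1)/2}{c_1\,e^{(1+\dd)\tau+c_2\tau^2}} \;\longrightarrow\; 0.
\]
Since $k_n(k_n+1)/2 \le k_n^{2} = e^{2\tau}$, the quotient is bounded by
\[
\frac{1}{c_1}\,e^{2\tau-(1+\dd)\tau-c_2\tau^2} = \frac{1}{c_1}\,e^{(1-\dd)\tau - c_2\tau^2},
\]
and the quadratic $-c_2\tau^2$ dominates the linear $(1-\dd)\tau$ as $\tau\to\infty$, giving limit $0$. Adding Steps~2 and~3 yields the claimed limit $1$.

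\medskip
\noindent\textbf{Main obstacle.} The only genuinely subtle point is Step~1, the control of $(k_n-k_n^{\dd})^{c_2\ln(k_n-k_n^{\dd})}$: one must confirm that the perturbation $\eta_n$ in $\ln(k_n-k_n^{\dd})=\tau+\eta_n$ is small \emph{enough} that even after being multiplied by the linear factor $2c_2\tau$ in the square $(\tau+\eta_n)^2$, the cross term still vanishes. This works because the condition $\dd<1$ forces $\eta_n$ to decay exponentially in $\tau$, which dominates the linear factor. Everything else is bookkeeping of asymptotic equivalents.
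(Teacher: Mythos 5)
Your proposal is correct and follows essentially the same route as the paper: the paper also factors $k_n-k_n^{\dd}=e^{\sigma(n)}\widetilde\sigma(n)$ with $\widetilde\sigma(n)=1-e^{(\dd-1)\sigma(n)}$ (your $e^{\eta_n}$), shows the cross term $\sigma(n)\ln\widetilde\sigma(n)\to 0$ and the remaining correction factors tend to $1$, and disposes of $\tfrac{k_n(k_n+1)}{2}$ by comparing $e^{2\sigma(n)}$ against $e^{c_2\sigma(n)^2}$. Your version merely collects the same corrections in the exponent as $e^{c_2(\tau+\eta_n)^2}$, which is a notational rather than a substantive difference.
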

\begin{proof}
Let $\widetilde\sigma(n)=1-e^{({\dd}-1)\sigma(n)}$. We have that \[k_n-k_n^{\dd}=e^{\sigma(n)}-e^{{\dd}\sigma(n)}=e^{\sigma(n)}(1-e^{({\dd}-1)\sigma(n)})=e^{\sigma(n)}\widetilde\sigma(n)\mbox{.}\]
It follows that
\begin{equation}
\label{rrt489djvv1}
\begin{split}
(k_n^{\dd}-1)(k_n-k_n^{\dd})c_1(k_n-k_n^{\dd})^{c_2\ln{(k_n-k_n^{\dd})}}= \\ 
(e^{{\dd}\sigma(n)}-1)e^{\sigma(n)}\widetilde\sigma(n)c_1(e^{\sigma(n)}\widetilde\sigma(n))^{c_2\ln{(e^{\sigma(n)}\widetilde\sigma(n))}}=\\
(e^{{\dd}\sigma(n)}-1)e^{\sigma(n)}\widetilde\sigma(n)c_1(e^{\sigma(n)}\widetilde\sigma(n))^{c_2(\sigma(n)+\ln{\widetilde\sigma(n))}}=\\
(e^{{\dd}\sigma(n)}-1)e^{\sigma(n)}\widetilde\sigma(n)c_1e^{c_2(\sigma(n))^2}(\widetilde\sigma(n))^{c_2\sigma(n)}e^{c_2\sigma(n)\ln{\widetilde\sigma(n)}}(\widetilde\sigma(n))^{c_2\ln{\widetilde\sigma(n)}}
\mbox{.}
\end{split}
\end{equation}
We have that:
\begin{itemize}
\item
\begin{equation}\label{t72094mvf}\lim_{n\rightarrow\infty}\widetilde\sigma(n)=\lim_{n\rightarrow\infty}(1-e^{({\dd}-1)\sigma(n)})=1-0=1\mbox{.}\end{equation}
\item
\begin{equation}\label{ndsh762dfs}\begin{split}\lim_{n\rightarrow\infty}(\widetilde\sigma(n))^{\sigma(n)}=\lim_{n\rightarrow\infty}(1-e^{({\dd}-1)\sigma(n)})^{\sigma(n)}=\\ \lim_{n\rightarrow\infty}\left(1-\frac{1}{\left(e^{(1-{\dd})}\right)^{\sigma(n)}}\right)^{\sigma(n)}=1\mbox{.}\end{split}\end{equation}
\item
From (\ref{ndsh762dfs}) it follows that 
\begin{equation}\label{yd7623dhj}\lim_{n\rightarrow\infty}\sigma(n)\ln{\widetilde\sigma(n)}=\lim_{n\rightarrow\infty}\ln{\left(\widetilde\sigma(n)\right)^{\sigma(n)}}=\ln{1}=0\mbox{.}\end{equation}
\item
\begin{equation}\label{nb26ddfjfj}\begin{split}\lim_{n\rightarrow\infty}(\widetilde\sigma(n))^{\ln{\widetilde\sigma(n)}}=\lim_{n\rightarrow\infty}(1-e^{({\dd}-1)\sigma(n)})^{\ln{\sigma(n)}}=\\ \lim_{n\rightarrow\infty}\left(1-\frac{1}{\left(e^{(1-{\dd})}\right)^{\sigma(n)}}\right)^{\ln{\sigma(n)}}=1\mbox{.}\end{split}\end{equation}
\item
\begin{equation}\label{jhsd78djre}\lim_{n\rightarrow\infty}\frac{e^{{\dd}\sigma(n)}-1}{e^{{\dd}\sigma(n)}}=1\mbox{.}\end{equation}
\end{itemize}
From (\ref{t72094mvf}), (\ref{ndsh762dfs}), (\ref{yd7623dhj}), (\ref{nb26ddfjfj}), and (\ref{jhsd78djre}) it follows that 
\begin{equation}\label{ft66xvcb44}\lim_{n\rightarrow\infty}\frac{(e^{{\dd}\sigma(n)}-1)e^{\sigma(n)}\widetilde\sigma(n)c_1e^{c_2(\sigma(n))^2}(\widetilde\sigma(n))^{c_2\sigma(n)}e^{c_2\sigma(n)\ln{\widetilde\sigma(n)}}(\widetilde\sigma(n))^{c_2\ln{\widetilde\sigma(n)}}}{e^{(1+\dd)\sigma(n)}c_1e^{c_2(\sigma(n))^2}}=1\mbox{.}\end{equation}
Obviously 
\begin{equation}\label{dh887ejei9}\begin{split} \lim_{n\rightarrow\infty}\frac{\frac{k_n(k_n+1)}{2}}{e^{(1+{\dd})\sigma(n)}c_1e^{c_2(\sigma(n))^2}}=  \lim_{n\rightarrow\infty}\frac{e^{2\sigma(n)}+e^{\sigma(n)}}{2e^{(1+{\dd})\sigma(n)}c_1e^{c_2(\sigma(n))^2}}=0\mbox{.}\end{split}\end{equation}
The proposition follows from (\ref{rrt489djvv1}),  (\ref{ft66xvcb44}), and (\ref{dh887ejei9}). This ends the proof.
\end{proof}

For $\sigma(n)=\alpha\sqrt{\ln{n}}+\beta$ we show that the function $e^{(1+d)\sigma(n)}e^{c_2(\sigma(n))^2}$ used in Proposition \ref{bb5f41441d} is a multiple of $n$ by a constant.
\begin{lemma}
\label{d78edhhye55}
If $\sigma(n)=\alpha\sqrt{\ln{n}}+\beta$ and $n\in\mathbb{N}_1$ then \[\frac{e^{(1+{\dd})\sigma(n)}e^{c_2(\sigma(n))^2}}{n}=e^{\beta+{\dd}\beta+c_2\beta^2}\mbox{.}\]
\end{lemma}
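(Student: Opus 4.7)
The plan is a direct algebraic verification, exploiting the fact that $\alpha=\sqrt{1/c_2}$ and $\beta=-(1+\dd)/(2c_2)$ have been chosen precisely so that the $\ln n$ part of the exponent collapses to $\ln n$ and the $\sqrt{\ln n}$ part vanishes.

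First I would expand the exponent $(1+\dd)\sigma(n)+c_2(\sigma(n))^2$ by substituting $\sigma(n)=\alpha\sqrt{\ln n}+\beta$. This yields three pieces: a linear piece $(1+\dd)(\alpha\sqrt{\ln n}+\beta)$, and the squared piece $c_2(\alpha^2\ln n+2\alpha\beta\sqrt{\ln n}+\beta^2)$. Grouping by powers of $\sqrt{\ln n}$, the exponent becomes
\[
c_2\alpha^2\,\ln n \;+\; \bigl((1+\dd)\alpha+2c_2\alpha\beta\bigr)\sqrt{\ln n} \;+\; (1+\dd)\beta + c_2\beta^2 .
\]

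Next I would apply the two defining identities for $\alpha$ and $\beta$. From $\alpha^2=1/c_2$ we get $c_2\alpha^2=1$, so the first term is exactly $\ln n$. From $\beta=-(1+\dd)/(2c_2)$ we get $2c_2\beta=-(1+\dd)$, hence $(1+\dd)+2c_2\beta=0$, and the entire coefficient of $\sqrt{\ln n}$ vanishes. The exponent therefore simplifies to
\[
\ln n + (1+\dd)\beta + c_2\beta^2 \;=\; \ln n + \beta + \dd\beta + c_2\beta^2 .
\]

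Exponentiating and dividing by $n$ gives
\[
\frac{e^{(1+\dd)\sigma(n)}e^{c_2(\sigma(n))^2}}{n}
= \frac{n\cdot e^{\beta+\dd\beta+c_2\beta^2}}{n}
= e^{\beta+\dd\beta+c_2\beta^2},
\]
which is the claimed equality. There is no real obstacle here; the entire content of the lemma is that the constants $\alpha,\beta$ defined earlier were tuned exactly for this cancellation, and the only thing to be careful about is the bookkeeping of the cross term $2c_2\alpha\beta\sqrt{\ln n}$ against $(1+\dd)\alpha\sqrt{\ln n}$, which cancels because of the choice of $\beta$.
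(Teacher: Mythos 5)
Your proposal is correct and follows essentially the same route as the paper's proof: expand the exponent $(1+\dd)\sigma(n)+c_2(\sigma(n))^2-\ln n$, group by powers of $\sqrt{\ln n}$, and use $c_2\alpha^2=1$ and $1+\dd+2c_2\beta=0$ to reduce it to $\beta+\dd\beta+c_2\beta^2$. Nothing is missing.
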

\begin{proof}

We have that 
\begin{equation}\label{vvnd783hdj}\begin{split}\frac{e^{(1+{\dd})\sigma(n)}e^{c_2(\sigma(n))^2}}{n}=e^{(1+{\dd})(\alpha\sqrt{\ln{n}}+\beta)+c_2(\alpha\sqrt{\ln{n}}+\beta)^2-\ln{n}}
\mbox{}\end{split}\end{equation}
and 
\begin{equation}\label{v65sh29kjh}\begin{split}(1+{\dd})(\alpha\sqrt{\ln{n}}+\beta)+c_2(\alpha\sqrt{\ln{n}}+\beta)^2-\ln{n}=\\ \alpha\sqrt{\ln{n}}+\beta+{\dd}\alpha\sqrt{\ln{n}}+{\dd}\beta+c_2\alpha^2\ln{n}+c_22\alpha\beta\sqrt{\ln{n}}+c_2\beta^2-\ln{n}=\\  (c_2\alpha^2-1)\ln{n}+\alpha(1+{\dd}+2c_2\beta)\sqrt{\ln{n}}+\beta+{\dd}\beta+c_2\beta^2=\\  (c_2\left(\sqrt{\frac{1}{c_2}}\right)^2-1)\ln{n}+\alpha(1+{\dd}+2c_2\frac{-1-{\dd}}{2c_2})\sqrt{\ln{n}}+\beta+{\dd}\beta+c_2\beta^2=\\ \beta+{\dd}\beta+c_2\beta^2\mbox{.}\end{split}\end{equation}

The proposition follows from (\ref{vvnd783hdj}) and (\ref{v65sh29kjh}).
This ends the proof.
\end{proof}

Using Lemma \ref{djuyeim8445},  Lemma \ref{d78edhhye55}, and Proposition \ref{bb5f41441d} we show that the sum $\sum_{i=1}^{k_n}i\lfloor c_1i^{c_2\ln{i}}\rfloor$ is bigger than $n$ as $n$ tends to infinity.
\begin{corollary}
\label{y7xxcvch83g}
If $\sigma(n)=\alpha\sqrt{\ln{n}}+\beta$ and $k_n=e^{\sigma(n)}$ then 
\[\lim_{n\rightarrow\infty}\frac{\sum_{i=1}^{\lfloor k_n\rfloor}i\lfloor c_1i^{c_2\ln{i}}\rfloor}{n}\geq c_1e^{\beta+{\dd}\beta+c_2\beta^2}\geq 1\mbox{.}\]
\end{corollary}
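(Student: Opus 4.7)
The plan is to chain together the three preceding results. Starting from Lemma \ref{djuyeim8445} applied with $k=\lfloor k_n\rfloor$, denote
\[
A(k)=(k^{\dd}-1)(k-k^{\dd})c_1(k-k^{\dd})^{c_2\ln{(k-k^{\dd})}}-\frac{k(k+1)}{2},
\]
so that $\sum_{i=1}^{\lfloor k_n\rfloor}i\lfloor c_1 i^{c_2\ln i}\rfloor\ge A(\lfloor k_n\rfloor)$. The goal is then to compute $\lim_{n\to\infty}A(\lfloor k_n\rfloor)/n$ and to recognize it as $c_1 e^{\beta+\dd\beta+c_2\beta^2}$.

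To do this I would multiply and divide by the expression $e^{(1+\dd)\sigma(n)}c_1 e^{c_2(\sigma(n))^2}$:
\[
\frac{A(\lfloor k_n\rfloor)}{n}=\frac{A(\lfloor k_n\rfloor)}{e^{(1+\dd)\sigma(n)}c_1 e^{c_2(\sigma(n))^2}}\cdot \frac{e^{(1+\dd)\sigma(n)}c_1 e^{c_2(\sigma(n))^2}}{n}.
\]
By Lemma \ref{d78edhhye55} the second factor is exactly the constant $c_1 e^{\beta+\dd\beta+c_2\beta^2}$, so the task reduces to showing that the first factor tends to $1$.

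This is where Proposition \ref{bb5f41441d} comes in, applied with the sequence $\sigma'(n)=\ln\lfloor k_n\rfloor$; it gives $A(\lfloor k_n\rfloor)/\bigl(e^{(1+\dd)\sigma'(n)}c_1 e^{c_2(\sigma'(n))^2}\bigr)\to 1$. What remains is the ratio $e^{(1+\dd)(\sigma'(n)-\sigma(n))+c_2((\sigma'(n))^2-(\sigma(n))^2)}$, which I expect to be the main (though mild) obstacle: Proposition \ref{bb5f41441d} is stated for integer-valued $\sigma$, whereas here $\sigma(n)=\alpha\sqrt{\ln n}+\beta$ is real-valued, and I must verify that replacing $k_n$ by $\lfloor k_n\rfloor$ does not affect the limit. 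This is handled by the estimate $|\sigma'(n)-\sigma(n)|=|\ln(\lfloor k_n\rfloor/k_n)|\le 2/k_n$ for large $n$, together with $\sigma'(n)+\sigma(n)=O(\sqrt{\ln n})$; since $k_n$ grows faster than any polynomial in $\sqrt{\ln n}$, both $\sigma'(n)-\sigma(n)$ and $(\sigma'(n))^2-(\sigma(n))^2$ tend to $0$, so the ratio tends to $1$.

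Combining everything yields $\lim_{n\to\infty}\sum_{i=1}^{\lfloor k_n\rfloor}i\lfloor c_1i^{c_2\ln i}\rfloor/n\ge c_1 e^{\beta+\dd\beta+c_2\beta^2}$, which proves the first inequality. The second inequality $c_1 e^{\beta+\dd\beta+c_2\beta^2}\ge 1$ is then immediate from the very definition of $c_1$ in the previous section, which was chosen precisely to enforce this bound. This explains the remark after the definition of $c_1$ that the condition $c_1 e^{\beta+\dd\beta+c_2\beta^2}\ge 1$ was imposed with exactly this corollary in mind.
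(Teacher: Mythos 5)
Your proposal is correct and follows essentially the same route as the paper: apply Lemma \ref{djuyeim8445} with $k=\lfloor k_n\rfloor$, normalize by $e^{(1+\dd)\sigma(n)}c_1e^{c_2(\sigma(n))^2}$ via Proposition \ref{bb5f41441d}, convert that normalizer to $c_1e^{\beta+\dd\beta+c_2\beta^2}n$ via Lemma \ref{d78edhhye55}, and finish with the defining property of $c_1$. The only cosmetic difference is how the floor is absorbed: the paper asserts that replacing $k_n$ by $\lfloor k_n\rfloor$ does not change the limit of the lower bound itself, while you compare the two normalizing denominators via $\sigma'(n)=\ln\lfloor k_n\rfloor$; your version actually spells out the estimate the paper leaves as ``obvious.''
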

\begin{proof}
Let $\widetilde k_n=\lfloor k_n\rfloor$. Obviously we have that
\begin{equation}\label{bf4r56465}\lim_{n\rightarrow}\frac{(\widetilde k_n^{\dd}-1)(\widetilde k_n-\widetilde k_n^{\dd})c_1(\widetilde k_n-\widetilde k_n^{\dd})^{c_2\ln{(\widetilde k_n-\widetilde k_n^{\dd})}}-\frac{\widetilde k_n(\widetilde k_n+1)}{2}}{(k_n^{\dd}-1)(k_n-k_n^{\dd})c_1(k_n-k_n^{\dd})^{c_2\ln{(k_n-k_n^{\dd})}}-\frac{k_n(k_n+1)}{2}}=1\mbox{.}\end{equation}
Then Lemma \ref{djuyeim8445}, Proposition \ref{bb5f41441d}, and (\ref{bf4r56465}) imply that 
\begin{equation}\label{vnzoid66e59}\lim_{n\rightarrow\infty}\frac{\sum_{i=1}^{\widetilde k_n}i\lfloor c_1i^{c_2\ln{i}}\rfloor}{e^{(1+{\dd})\sigma(n)}c_1e^{c_2(\sigma(n))^2}}\geq 1\mbox{.}\end{equation}

Recall that from the definition of $c_1$ we have that $c_1e^{\beta+{\dd}\beta+c_2\beta^2}\geq 1$.
Hence Lemma \ref{d78edhhye55} and (\ref{vnzoid66e59}) imply the corollary.
This completes the proof.
\end{proof}

Let $\gamma=\beta+\ln{c_1}+c_2\beta^2=\frac{-1-{\dd}}{2c_2}+\ln{c_1}+c_2(\frac{-1-{\dd}}{2c_2})^2\in\mathbb{R}$. The next lemma shows an upper bound for the sum $\sum_{i=1}^{k_n}c_1i^{c_2\ln{i}}$, where $k_n=\lfloor e^{\alpha\sqrt{\ln{n}}+\beta}\rfloor$.
\begin{lemma}
\label{ooi22u437gdt5}
If $n\in\mathbb{N}_1$ and $\sigma(n)=\alpha\sqrt{\ln{n}}+\beta$ then 
\[\begin{split}\sum_{i=1}^{\lfloor e^{\sigma(n)}\rfloor}c_1i^{c_2\ln{i}}\leq e^{\gamma}\frac{n}{e^{{\dd}\alpha\sqrt{\ln{n}}}}\mbox{.} 
\end{split}\]
\end{lemma}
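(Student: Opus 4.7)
The plan is to bound every term $c_1 i^{c_2\ln i}$ uniformly by its value at the top of the range, and then multiply by the number of summands. Specifically, for $1\leq i\leq e^{\sigma(n)}$ we have $\ln i\leq \sigma(n)$, so
\[
c_1 i^{c_2\ln i}=c_1 e^{c_2(\ln i)^2}\leq c_1 e^{c_2\sigma(n)^2}.
\]
Since the sum ranges over at most $e^{\sigma(n)}$ integers, this gives
\[
\sum_{i=1}^{\lfloor e^{\sigma(n)}\rfloor} c_1 i^{c_2\ln i}\leq e^{\sigma(n)}\cdot c_1 e^{c_2\sigma(n)^2}=c_1\, e^{\sigma(n)+c_2\sigma(n)^2}.
\]

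Next, I expand the exponent using $\sigma(n)=\alpha\sqrt{\ln n}+\beta$. This is exactly the algebraic manipulation already carried out in the proof of Lemma \ref{d78edhhye55}, except with $1$ in place of $1+d$ as the coefficient of the linear term in $\sigma$. Writing
\[
\sigma(n)+c_2\sigma(n)^2=\ln n+\alpha(1+2c_2\beta)\sqrt{\ln n}+\beta+c_2\beta^2+(c_2\alpha^2-1)\ln n,
\]
I substitute the defining values $\alpha=\sqrt{1/c_2}$, which kills the $\ln n$ correction, and $\beta=\frac{-1-{\dd}}{2c_2}$, which gives $1+2c_2\beta=-{\dd}$. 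Thus
\[
\sigma(n)+c_2\sigma(n)^2=\ln n-{\dd}\alpha\sqrt{\ln n}+\beta+c_2\beta^2.
\]

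Multiplying through by $c_1$ and recalling $\gamma=\beta+\ln c_1+c_2\beta^2$, we obtain
\[
c_1\, e^{\sigma(n)+c_2\sigma(n)^2}=e^{\ln c_1+\beta+c_2\beta^2}\cdot\frac{n}{e^{{\dd}\alpha\sqrt{\ln n}}}=e^{\gamma}\frac{n}{e^{{\dd}\alpha\sqrt{\ln n}}},
\]
which is the claimed bound. The argument is essentially bookkeeping, so there is no real obstacle; the only point that requires attention is the uniform-majorant step at the start, which is valid precisely because the function $i\mapsto i^{c_2\ln i}$ is monotonically increasing on $[1,\infty)$, so that its maximum over the summation range is attained at (or below) $e^{\sigma(n)}$.
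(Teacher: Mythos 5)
Your proof is correct and follows essentially the same route as the paper: bound every summand by the value at $i=e^{\sigma(n)}$, multiply by the number of terms $e^{\sigma(n)}$, and then expand $\sigma(n)+c_2\sigma(n)^2$ using the defining values of $\alpha$ and $\beta$ to cancel the $\ln n$ coefficient and produce $-{\dd}\alpha\sqrt{\ln n}$. The bookkeeping matches the paper's computation exactly, so there is nothing further to add.
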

\begin{proof}
We have that 
\begin{equation}\label{vn9837hdkw1}\begin{split}\sum_{i=1}^{\lfloor e^{\sigma(n)}\rfloor}c_1i^{c_2\ln{i}}\leq e^{\sigma(n)}c_1\left(e^{\sigma(n)}\right)^{c_2\ln{e^{\sigma(n)}}}= e^{\sigma(n)}c_1e^{c_2(\sigma(n))^2}=\\ e^{\alpha\sqrt{\ln{n}}+\beta}e^{\ln{c_1}}e^{c_2(\alpha\sqrt{\ln{n}}+\beta)^2}\mbox{}\end{split}\end{equation}
and 
\begin{equation}\label{vv78zz1po}\begin{split} \alpha\sqrt{\ln{n}}+\beta+\ln{c_1}+c_2(\alpha\sqrt{\ln{n}}+\beta)^2=\\ \alpha\sqrt{\ln{n}}+\beta+\ln{c_1}+c_2\alpha^2\ln{n}+c_22\alpha\beta\sqrt{\ln{n}}+c_2\beta^2=\\ 
c_2\alpha^2\ln{n}+\sqrt{\ln{n}}(\alpha+2c_2\alpha\beta)+\gamma\mbox{.}\end{split}\end{equation}
Note that $c_2\alpha=c_2(\sqrt{\frac{1}{c_2}})^2=1$ and $\alpha+2c_2\alpha\beta=\sqrt{\frac{1}{c_2}}(1+2c_2\frac{-1-{\dd}}{2c_2})=-{\dd}\alpha$. Then it follows from (\ref{vn9837hdkw1}) and (\ref{vv78zz1po}) that 
\[\sum_{i=1}^{\lfloor e^{\sigma(n)}\rfloor}c_1i^{c_2\ln{i}}\leq e^{\ln{n}+\sqrt{\ln{n}}(-{\dd}\alpha)+\gamma}=e^{\gamma}\frac{n}{e^{{\dd}\alpha\sqrt{\ln{n}}}}\mbox{.}\]
This completes the proof.
\end{proof}

In the proof of the main theorem we will need the following technical lemma. 
\begin{lemma}
\label{d78ejjhd111w}
If $\overline t,t\in\mathbb{N}_1$, $\overline t>t$, $\overline\omega, \omega:\mathbb{N}_1\rightarrow\mathbb{N}_1$, $\overline\omega(i)\leq \omega(i)$ for every $i\in\mathbb{N}_1$, and \[\sum_{i=1}^{\overline t}\overline\omega(i)>\sum_{i=1}^{t}\omega(i)\] then 
\[\sum_{i=1}^{\overline t}i\overline\omega(i)>\sum_{i=1}^{t}i\omega(i)\mbox{.}\]
\end{lemma}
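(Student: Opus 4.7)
The plan is to decompose both weighted sums by splitting the $\overline t$-range at the cut-off $t$, so that the hypothesis $\sum_{i=1}^{\overline t}\overline\omega(i)>\sum_{i=1}^{t}\omega(i)$ can be reformulated as a comparison between the ``high-index tail'' of $\overline\omega$ and the ``deficit'' of $\overline\omega$ relative to $\omega$ on the low indices. Since the tail indices are large ($\geq t+1$) and the deficit indices are small ($\leq t$), multiplying by $i$ should amplify the tail and shrink the deficit, giving the desired strict inequality.

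Concretely, I would first split
\[
\sum_{i=1}^{\overline t} i\,\overline\omega(i) \;=\; \sum_{i=1}^{t} i\,\overline\omega(i) \;+\; \sum_{i=t+1}^{\overline t} i\,\overline\omega(i),
\]
so that
\[
\sum_{i=1}^{\overline t} i\,\overline\omega(i) - \sum_{i=1}^{t} i\,\omega(i) \;=\; \sum_{i=t+1}^{\overline t} i\,\overline\omega(i) \;-\; \sum_{i=1}^{t} i\bigl(\omega(i)-\overline\omega(i)\bigr).
\]
Both expressions on the right are sums of nonnegative terms, because $\overline\omega(i)\geq 1$ and $\omega(i)\geq\overline\omega(i)$ by hypothesis. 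I would then introduce the abbreviations $T=\sum_{i=t+1}^{\overline t}\overline\omega(i)$ and $S=\sum_{i=1}^{t}(\omega(i)-\overline\omega(i))$, observe that the hypothesis $\sum_{i=1}^{\overline t}\overline\omega(i)>\sum_{i=1}^{t}\omega(i)$ is equivalent, after subtracting $\sum_{i=1}^{t}\overline\omega(i)$ from both sides, to $T>S$, and note that $T$ and $S$ are both integers with $T\geq \overline t-t\geq 1$.

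The key bounds are the trivial estimates arising from the index ranges: $\sum_{i=t+1}^{\overline t} i\,\overline\omega(i)\geq (t+1)T$ and $\sum_{i=1}^{t} i(\omega(i)-\overline\omega(i))\leq t\,S$. Combining them,
\[
\sum_{i=1}^{\overline t} i\,\overline\omega(i) - \sum_{i=1}^{t} i\,\omega(i) \;\geq\; (t+1)T - t\,S \;=\; t(T-S) + T \;\geq\; t\cdot 1 + 1 \;>\; 0,
\]
where the penultimate step uses that $T-S\geq 1$ (integers) and $T\geq 1$.

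There is no real obstacle here; the statement is a discrete rearrangement-type inequality, and the only subtlety is book-keeping the strictness of the inequality. The observation that $T$ and $S$ are integers (so $T>S$ upgrades to $T-S\geq 1$) and that $T\geq 1$ (because $\overline\omega$ takes values in $\mathbb{N}_1$ and the summation range is nonempty since $\overline t>t$) is what turns the chain of bounds into a strict $>0$ conclusion rather than merely $\geq 0$.
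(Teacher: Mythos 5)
Your proof is correct and follows essentially the same route as the paper: both split the sums at the cut-off $t$, reduce the hypothesis to $T>S$ with $T=\sum_{i=t+1}^{\overline t}\overline\omega(i)$ and $S=\sum_{i=1}^{t}(\omega(i)-\overline\omega(i))$, and then pass to the weighted version. The only difference is that the paper declares the step from $T>S$ to $\sum_{i=t+1}^{\overline t}i\,\overline\omega(i)>\sum_{i=1}^{t}i(\omega(i)-\overline\omega(i))$ to be ``obvious,'' whereas you justify it explicitly via the bounds $(t+1)T$ and $tS$ and the integrality of $T-S$.
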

\begin{proof}
We have that 
\begin{equation}\label{g77e8vxsh}\begin{split}\sum_{i=1}^{\overline t}\overline\omega(i)-\sum_{i=1}^{t}\omega(i)=\\ \sum_{i=1}^{t}\overline\omega(i)+\sum_{i=t+1}^{\overline t}\overline\omega(i) - \left(\sum_{i=1}^{t}(\omega(i)-\overline \omega(i))+\sum_{i=1}^{t}\overline\omega(i)\right) = \\ \sum_{i=t+1}^{\overline t}\overline\omega(i)-\sum_{i=1}^{t}(\omega(i)-\overline \omega(i))\mbox{.}\end{split}\end{equation}
Since $\sum_{i=1}^{\overline t}\overline\omega(i)>\sum_{i=1}^{t}\omega(i)$ it follows from (\ref{g77e8vxsh}) that \begin{equation}\label{ttncb129c}\sum_{i=t+1}^{\overline t}\overline\omega(i)>\sum_{i=1}^{t}(\omega(i)-\overline \omega(i))\mbox{.}\end{equation}

From (\ref{ttncb129c}) it is obvious that \begin{equation}\label{du739nbsj9}\sum_{i=t+1}^{\overline t}i\overline\omega(i)>\sum_{i=1}^{t}i(\omega(i)-\overline \omega(i))\mbox{.}\end{equation}
We have that
\begin{equation}\label{g88s22bcgf}\begin{split}\sum_{i=1}^{\overline t}i\overline\omega(i)-\sum_{i=1}^{t}i\omega(i)=\\ \sum_{i=1}^{t}i\overline\omega(i)+\sum_{i=t+1}^{\overline t}i\overline\omega(i) - \left(\sum_{i=1}^{t}i(\omega(i)-\overline \omega(i))+\sum_{i=1}^{t}i\overline\omega(i)\right) = \\ \sum_{i=t+1}^{\overline t}i\overline\omega(i)-\sum_{i=1}^{t}i(\omega(i)-\overline \omega(i))\mbox{.}\end{split}\end{equation}
It follows from (\ref{du739nbsj9}) and (\ref{g88s22bcgf}) that $\sum_{i=1}^{\overline t}i\overline\omega(i)>\sum_{i=1}^{t}i\omega(i)$.
This ends the proof.
\end{proof}

\begin{remark}
Lemma \ref{d78ejjhd111w} can be interpreted as follows.
Suppose two finite sets $\overline \Omega,\Omega\subset \Alpha^+$. Let $\overline \omega(i)=\vert \overline\Omega\cap\Alpha^i\vert$ and $\omega(i)=\vert \Omega\cap\Alpha^i\vert$. Let $\overline w, w\in\Alpha^+$ be words that are a concatenation of all words from $\overline\Omega, \Omega$ respectively. Lemma \ref{d78ejjhd111w} implies that if $\vert\overline\Omega\vert>\vert\Omega\vert$ and $\overline \omega(i)\leq\omega(i)$ then $\vert \overline w\vert>\vert w\vert$. This interpretation will simplify the comprehension of the proof of Proposition \ref{tued8jnnsjhs}.
\end{remark}

The next proposition shows a technique how to derive an upper bound for the length of UPS-factorization.
\begin{proposition}
\label{tued8jnnsjhs}
If $w\in\rws\cap\Alpha^+$, $n=\vert w\vert$, $t\in\mathbb{N}_1$, and \[\sum_{i=1}^t i\lfloor c_1i^{c_2\ln{i}}\rfloor\geq n\] then \[\LUF(w)\leq \sum_{i=1}^t c_1i^{c_2\ln{i}}\mbox{.}\]
\end{proposition}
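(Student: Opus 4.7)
My plan is to prove the proposition by contradiction, leveraging Lemma \ref{d78ejjhd111w} to translate a hypothetical excess in the \emph{count} of UPS-factors into an excess in the \emph{length} of $w$.

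First I would set up notation. Let $w=w_pw_{p-1}\cdots w_1$ be the UPS-factorization and, for every $i\in\mathbb{N}_1$, let $p_i$ denote the number of indices $j$ with $|w_j|=i$. Then $\LUF(w)=\sum_{i}p_i$ and $n=\sum_{i}i\,p_i$. The crucial observation is that the palindromes $w_1,w_2,\dots,w_p$ are pairwise distinct (this is built into Lemma \ref{lemma_A}), so the $p_i$ factors of length $i$ appearing in the UPS-factorization are distinct factors of $w$. Hence Corollary \ref{tture7eyu8ui} gives $p_i\leq\lfloor c_1 i^{c_2\ln i}\rfloor$ for every $i\in\mathbb{N}_1$.

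Next, suppose for contradiction that $\LUF(w)>\sum_{i=1}^{t}c_1 i^{c_2\ln i}$. Since $\LUF(w)$ is an integer and the right-hand side dominates $\sum_{i=1}^{t}\lfloor c_1 i^{c_2\ln i}\rfloor$, this yields $\LUF(w)>\sum_{i=1}^{t}\lfloor c_1 i^{c_2\ln i}\rfloor$. Define $\overline\omega(i)=p_i$ and $\omega(i)=\lfloor c_1 i^{c_2\ln i}\rfloor$, and let $\overline t$ be the largest length occurring in the UPS-factorization, so that $\LUF(w)=\sum_{i=1}^{\overline t}\overline\omega(i)$. The per-length inequality $p_i\leq\omega(i)$ forces $\sum_{i=1}^{t}\overline\omega(i)\leq\sum_{i=1}^{t}\omega(i)<\LUF(w)=\sum_{i=1}^{\overline t}\overline\omega(i)$, which in particular implies $\overline t>t$.

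I would then invoke Lemma \ref{d78ejjhd111w} to conclude $\sum_{i=1}^{\overline t}i\,\overline\omega(i)>\sum_{i=1}^{t}i\,\omega(i)$. The left-hand side is exactly $n$; the right-hand side is $\sum_{i=1}^{t}i\lfloor c_1 i^{c_2\ln i}\rfloor$, which is at least $n$ by the hypothesis of the proposition. This yields $n>n$, the desired contradiction. The only technical point to verify is that Lemma \ref{d78ejjhd111w} applies after extending $\overline\omega$ by zero past $\overline t$, but this amounts to checking that the pointwise inequality $\overline\omega(i)\leq\omega(i)$ holds for all $i\in\mathbb{N}_1$, which is immediate from the factor-complexity bound. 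I do not foresee any further obstacle.
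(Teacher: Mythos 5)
Your proposal is correct and follows essentially the same route as the paper: count the UPS-factors by length, bound each count via the factor-complexity estimate of Corollary \ref{tture7eyu8ui}, and use Lemma \ref{d78ejjhd111w} to convert the assumed excess in the number of factors into the contradiction $n>n$. The only (harmless) differences are cosmetic: you take $\omega(i)=\lfloor c_1 i^{c_2\ln i}\rfloor$ so that Lemma \ref{d78ejjhd111w} applies verbatim, and you deduce $\overline t>t$ directly from the pointwise bound rather than treating $\overline t\leq t$ as a separate contradiction case as the paper does.
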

\begin{proof}
Let $t\in\mathbb{N}_1$ be the minimal value such that \begin{equation}\label{dju87883jk}\sum_{i=1}^t i\lfloor c_1i^{c_2\ln{i}}\rfloor\geq n\mbox{.}\end{equation}
We claim that $\LUF(w)\leq \sum_{i=1}^t c_1i^{c_2\ln{i}}$. To get a contradiction suppose that \begin{equation}\label{dhy888djhw7}h=\LUF(w)>\sum_{i=1}^t c_1i^{c_2\ln{i}}\mbox{.}\end{equation} 
Let $w_1,w_2,\dots,w_h\in\Factor_w\cap \Pal\cap\Alpha^+$ be distinct nonempty palindromes of UPS-factorization of $w$; it means that $w=w_{1}w_{2}\cdots w_{h}$.
Let \[\phi(k)=\vert\{w_i\mid w_i\in\Alpha^k\mbox{ and }i\in\{1,2,\dots,h\}\}\vert\] be the number of palindromes $w_i$ of length $k$.
Let $\overline t\in\mathbb{N}_1$ be the maximal value such that $\phi(\overline t)>0$. Clearly we have that $h=\sum_{k= 1}^{\overline t}\phi(k)$. Hence it follows from (\ref{dhy888djhw7}) that \begin{equation}\label{djue8r7jj}\sum_{k= 1}^{\overline t}\phi(k)>\sum_{i=1}^t c_1i^{c_2\ln{i}}\mbox{.}\end{equation}

Since $w$ is rich, Corollary \ref{tture7eyu8ui} implies that \begin{equation}\label{duu77c6sj2bdj3}\phi(k)\leq c_1k^{c_2\ln{k}}\quad\mbox{ for every }k\in\{1,2,\dots,\overline t\}\mbox{.}\end{equation}

Suppose that $\overline t>t$. Then from Lemma \ref{d78ejjhd111w}, (\ref{djue8r7jj}), and (\ref{duu77c6sj2bdj3}) it follows that \[\sum_{k= 1}^{\overline t}k\phi(k)>\sum_{i=1}^t ic_1i^{c_2\ln{i}}\mbox{,}\] which is a contradiction to (\ref{dju87883jk}), since obviously $n=\sum_{k= 1}^{\overline t}k\phi(k)$. We conclude that $\overline t\leq t$. Then (\ref{djue8r7jj}) implies that there is at least one $k$ such that $\phi(k)> c_1k^{c_2\ln{k}}$, which contradicts to (\ref{duu77c6sj2bdj3}).
We conclude that $\LUF(w)\leq \sum_{i=1}^t c_1i^{c_2\ln{i}}$. This completes the proof.
\end{proof}

Now, we can step to the  proof of the main theorem of the current article.
\begin{proof}[Proof of Theorem \ref{ididujjeoe90d}]
We have that $w\in\rws\cap\Alpha^+$ and $n=\vert w\vert$.
Let \[t= \lfloor e^{\alpha\sqrt{\ln{n}}+\beta}\rfloor\mbox{.}\] Then Corollary \ref{y7xxcvch83g} implies that \begin{equation}\label{dbh37djhdu}\sum_{i=1}^t i\lfloor c_1i^{c_2\ln{i}}\rfloor\geq n\end{equation} and Lemma \ref{ooi22u437gdt5} implies that \begin{equation}\label{d66eh788d73j}\sum_{i=1}^t c_1i^{c_2\ln{i}}\leq e^{\gamma}\frac{n}{e^{{\dd}\alpha\sqrt{\ln{n}}}}\quad\mbox{as $n$ tends to infinity.}\end{equation}  
Proposition \ref{tued8jnnsjhs} and (\ref{dbh37djhdu}) imply that 
\begin{equation}\label{tue7duyubb2}\LUF(w)\leq\sum_{i=1}^t c_1i^{c_2\ln{i}}\mbox{.}\end{equation}
From (\ref{d66eh788d73j}) and (\ref{tue7duyubb2})  it follows that there are constants $\mu,\pi\in\mathbb{R^+}$ such that \[\LUF(w)\leq \mu\frac{n}{e^{\pi\sqrt{\ln{n}}}}\mbox{.}\]
This completes the proof.
\end{proof}

\section*{Acknowledgments}
This work was supported by the Grant Agency of the Czech Technical University in Prague, grant No. SGS20/183/OHK4/3T/14.

\bibliographystyle{siam}
\IfFileExists{biblio.bib}{\bibliography{biblio}}{\bibliography{../!bibliography/biblio}}

\begin{thebibliography}{10}

\bibitem{AGO2021184}
{\sc K.~Ago, B.~Bašić, S.~Hačko, and D.~Mitrović}, {\em On generalized
  highly potential words}, Theoretical Computer Science, 849 (2021),
  pp.~184--196.

\bibitem{BaPeSta2}
{\sc L.~Balkov\'a, E.~Pelantov\'a, and {\v{S}}.~Starosta}, {\em Sturmian jungle
  (or garden?) on multiliteral alphabets}, RAIRO-Theor. Inf. Appl., 44 (2010),
  pp.~443--470.

\bibitem{Bannai2015}
{\sc H.~Bannai, T.~Gagie, S.~Inenaga, J.~K{\"a}rkk{\"a}inen, D.~Kempa,
  M.~Pi{ą}tkowski, S.~J. Puglisi, and S.~Sugimoto}, {\em Diverse palindromic
  factorization is {NP}-complete}, in Developments in Language Theory: 19th
  International Conference, DLT 2015, Liverpool, UK, July 27-30, 2015,
  Proceedings., I.~Potapov, ed., Springer International Publishing, 2015,
  pp.~85--96.

\bibitem{BuLuGlZa2}
{\sc M.~Bucci, A.~{De Luca}, A.~Glen, and L.~Q. Zamboni}, {\em A new
  characteristic property of rich words}, Theor. Comput. Sci., 410 (2009),
  pp.~2860--2863.

\bibitem{DrJuPi}
{\sc X.~Droubay, J.~Justin, and G.~Pirillo}, {\em Episturmian words and some
  constructions of de {L}uca and {R}auzy}, Theor. Comput. Sci., 255 (2001),
  pp.~539--553.

\bibitem{FrPuZa}
{\sc A.~Frid, S.~Puzynina, and L.~Zamboni}, {\em On palindromic factorization
  of words}, Adv. Appl. Math., 50 (2013), pp.~737--748.

\bibitem{GlJuWiZa}
{\sc A.~Glen, J.~Justin, S.~Widmer, and L.~Q. Zamboni}, {\em Palindromic
  richness}, Eur. J. Combin., 30 (2009), pp.~510--531.

\bibitem{GuShSh15}
{\sc C.~Guo, J.~Shallit, and A.~M. Shur}, {\em Palindromic rich words and
  run-length encodings}, Inform. Process. Lett., 116 (2016), pp.~735--738.

\bibitem{RukavickaRichWords2017}
{\sc J.~Rukavicka}, {\em On the number of rich words}, Developments in Language
  Theory: 21st International Conference, DLT 2017, Li{\`e}ge, Belgium, August
  7-11, 2017, Proceedings, Springer International Publishing,
  ISBN:978-3-319-62809-7, available at
  https://doi.org/10.1007/978-3-319-62809-7\_26,  (2017), pp.~345--352.

\bibitem{RUKAVICKA2021richext}
{\sc J.~Rukavicka}, {\em A unique extension of rich words}, Theoretical
  Computer Science,  (2021).

\bibitem{RukavickaFacCmplx2021}
{\sc {Rukavicka, Josef}}, {\em Upper bound for palindromic and factor
  complexity of rich words}, RAIRO-Theor. Inf. Appl., 55 (2021), p.~1.

\bibitem{Vesti2014}
{\sc J.~Vesti}, {\em Extensions of rich words}, Theor. Comput. Sci., 548
  (2014), pp.~14--24.

\end{thebibliography}

\end{document}